\newcommand{\comment}[1]{}
\newtheorem{lem}{Lemma}%[section]
\newtheorem{cor}{Corollary}
\newtheorem{thm}[lem]{Theorem}
\newtheorem*{thmA}{Theorem A}
\newtheorem*{thmB}{Theorem B}
\newtheorem*{thmC}{Theorem C}
\newtheorem*{thmD}{Theorem D}
\theoremstyle{remark}
\newtheorem{rem}{Remark}
\theoremstyle{definition}
\newcommand{\Z}{\mathbb Z}
\newcommand{\N}{\mathbb N}
\newcommand{\E}{\epsilon}
\newcommand{\VE}{\varepsilon}
\newcommand{\T}{\theta}
\newcommand{\be}{\begin{equation}}
\newcommand{\ee}{\end{equation}}
\newcommand{\bee}{\begin{equation*}}
\newcommand{\eee}{\end{equation*}}
\begin{document}
\title{Arithmetic Structure in Sparse Difference Sets}
\author[Hamel \quad Lyall \quad Thompson \quad Walters]{Mariah Hamel \quad Neil Lyall \quad Katherine Thompson \quad Nathan Walters}

\begin{abstract}
Using a slight modification of an argument of Croot, Ruzsa and Schoen we establish a quantitative result on the existence of a dilated copy of any given configuration of integer points in sparse difference sets. More precisely, given any configuration $\{v_1,\dots,v_\ell\}$ of vectors in $\mathbb{Z}^d$, we show that if $A\subset[1,N]^d$ with $|A|/N^d\geq C N^{-1/\ell}$, then there necessarily exists $r\ne0$ such that $\{rv_1,\dots,rv_\ell\}\subseteq A-A$. 
%A polynomial generalization of this result is also presented.

\end{abstract}
\thanks{
This work was carried out while all four authors were members of the \emph{VIGRE Research Group in Arithmetic Combinatorics at the University of Georgia}. The first, third and fourth authors were partially supported by this NSF VIGRE grant. The second author was partially supported by NSF grant 0707099.}

\address{Department of Mathematics, The University of Georgia, 
%Boyd Graduate Studies Research Center, 
Athens, GA 30602, USA}
\email{mhamel@math.uga.edu,\,lyall@math.uga.edu,\,thompson@math.uga.edu,\,nwalters@math.uga.edu}
%\date{\today}
\maketitle
%\tableofcontents

\section{Introduction}

Many familiar theorems in mathematics have as a common feature the phenomenon that the set of differences from a sufficiently large set contains non-trivial structure. In this paper we study instances of this phenomenon in the finite setting of subsets of $\{1,2,\dots,N\}^d$. In this setting we are able obtain to quantitative structure theorems by adapting an %modification of the 
argument of Croot, Ruzsa and Schoen \cite{CRS}.

%Using only simple combinatorial arguments Croot, Ruzsa and Schoen were able to show that such structures necessarily exist in the difference set of very sparse subsets of $[1,N]=\{1,\dots,N\}$.

\subsection{Arithmetic progressions in sumsets}

A good measure of the amount of additive structure in a given finite set of integers is provided by the size of the longest arithmetic progression that the set contains.  

In \cite{CRS} Croot, Ruzsa and Schoen establish, using only simple combinatorial arguments, the following structural result along these very lines for the sumset 
\be
A+B:=\{a+b\,:\,a\in A,\, b\in B\}
\ee
of two given sparse sets $A,B\subseteq[1,N]=\{1,2,\dots,N\}$.

\begin{thmA}[Croot, Ruzsa and Schoen \cite{CRS}]\label{CRS}
Let $A,B\subseteq[1,N]$ and %$\D(A)=|A|/N$.
$m\geq3$ be an odd positive integer.

If $|A||B|/N^2\geq 6 N^{-2/(m-1)}$, then $A+B$ contains an arithmetic progression of length at least $m$. 
\end{thmA}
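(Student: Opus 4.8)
My plan is to replace $A+B$ by the representation function $r:=\mathbf 1_A*\mathbf 1_B$, so that $A+B=\supp r$, $\sum_x r(x)=|A||B|$ and $\supp r\subseteq[2,2N]$, and then to produce the progression by counting rather than by searching for structure inside $A$ and $B$ separately (the latter cannot work: $A$ and $B$ may themselves contain no long progression while $A+B$ still does). The case $m=3$ already shows the flavour by a shortcut: by Cauchy--Schwarz,
\be
\sum_x r(x)^2=\sum_{t\in\Z}|A\cap(A+t)|\,|B\cap(B+t)|\ \geq\ \frac{\bigl(\sum_x r(x)\bigr)^2}{|A+B|}\ \geq\ \frac{(|A||B|)^2}{2N}\,,
\ee
so $|A||B|>2N$ forces $\sum_x r(x)^2>\sum_x r(x)$, i.e.\ $r(x)\geq 2$ for some $x$; this gives $(a,b)\neq(a',b')$ in $A\times B$ with $a+b=a'+b'$, hence $a-a'=b'-b=:t\neq0$, and then $a'+b,\ a+b,\ a+b'$ is a $3$-term progression in $A+B$. (Only $|A||B|>2N$ is used here; the constant $6$ is there to feed larger $m$.)

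For the given odd $m=2k+1$ the natural target is
\be
\Lambda_m(r):=\sum_{x,\,d}\ \prod_{j=0}^{m-1}r(x+jd)\ >\ \sum_x r(x)^m\,,
\ee
since the right-hand side is exactly the contribution of the degenerate ($d=0$) progressions, so any strict excess comes from some $d\neq0$ with $x,x+d,\dots,x+(m-1)d\in\supp r=A+B$; over $\Z$ a nonconstant $m$-term AP has $m$ distinct terms, so this is a bona fide progression, not a coincidence of the counting. To bound $\Lambda_m(r)$ from below I would expand $r=\mathbf 1_A*\mathbf 1_B$ and separate the $A$- and $B$-variables: with $\Phi_A(\vec s)$ the number of $m$-tuples in $A^m$ whose string of consecutive second differences is $\vec s\in\Z^{m-2}$ (so $\Phi_A(\vec 0)=\#\{m\text{-term APs in }A\}\geq|A|$ and $\sum_{\vec s}\Phi_A(\vec s)=|A|^m$), one gets $\Lambda_m(r)=\sum_{\vec s}\Phi_A(\vec s)\,\Phi_B(-\vec s)$. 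Combined with $\|r\|_\infty\leq\min(|A|,|B|)$, $\|r\|_1=|A||B|$, $\supp r\subseteq[2,2N]$ and a convexity/rearrangement estimate on this bilinear form, this should yield a bound of the shape $\Lambda_m(r)\gtrsim_m (|A||B|)^m/N^{m-2}$; since $\sum_x r(x)^m\leq\min(|A|,|B|)^{m-1}|A||B|$, the displayed inequality then reduces to $\max(|A|,|B|)^{m-1}\gtrsim_m N^{m-2}$, which is exactly what $|A||B|\geq6N^{2-2/(m-1)}$ gives (one has $\max(|A|,|B|)\geq\sqrt{|A||B|}\geq\sqrt6\,N^{(m-2)/(m-1)}$, the numerical constant being tuned to absorb the comparison losses).

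The step I expect to be the real obstacle is the uniform lower bound $\Lambda_m(\mathbf 1_A*\mathbf 1_B)\gtrsim_m (|A||B|)^m/N^{m-2}$, valid for \emph{every} pair $A,B$. A naive Fourier expansion produces a main term of precisely this size, but an error term of the same order appears as soon as $A$ and $B$ share a large Fourier coefficient (for instance $A=B$ a Behrend set, where neither set has any three-term progression but $A+B$ essentially fills an interval), so no amount of $L^2$-uniformity will do. The estimate must instead exploit the structural fact that a convolution of two indicator functions cannot be as progression-poor as a generic nonnegative function with the same $\ell^1$-mass and support, extracting the progressions more or less by hand and with a clean constant; supplying this input is exactly where the argument of Croot, Ruzsa and Schoen (and the present modification of it) is needed.
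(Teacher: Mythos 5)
Your plan is not a proof: the entire weight rests on the lower bound $\Lambda_m(\mathbf 1_A*\mathbf 1_B)\gtrsim_m (|A||B|)^m/N^{m-2}$, which you do not establish and which you yourself flag as ``the real obstacle''. This is not a technical loose end but the whole theorem. For $m\geq 5$ a count of $m$-term progressions is governed by higher-order (Gowers) uniformity of degree $m-2$, not by any $L^1$, $L^\infty$ or convexity information, so the tools you list ($\|r\|_\infty$, $\|r\|_1$, the support, the bilinear form $\sum_{\vec s}\Phi_A(\vec s)\Phi_B(-\vec s)$) cannot by themselves force the bound; and even for $m=3$ the Behrend example you mention shows that nothing short of a genuinely new structural input about convolutions will do. Since you explicitly defer that input to ``the argument of Croot, Ruzsa and Schoen'', the proposal is circular at its only nontrivial step. (Your $m=3$ warm-up is correct, but it proves only that single case under the weaker hypothesis $|A||B|>2N$, and does not extend along the route you describe.)

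The missing idea is that one should not count progressions in $A+B$ at all. First average over the shift: since $\sum_{t\in[2,2N]}|B\cap(t-A)|=|A||B|$, there is a $t$ with $D:=B\cap(t-A)$ of size at least $|A||B|/(2N-1)$, and $D-D+t\subseteq A+B$, so it suffices to place a symmetric progression $\{0,\pm h,\dots,\pm\ell h\}$ of length $m=2\ell+1$ inside $D-D$ for a single set $D\subseteq[1,N]$. For that, set $\mathcal{R}_w=\{r\in[1,N/\ell]\,:\,jr+w_j\in D\ (1\leq j\leq\ell)\}$ for $w=(w_1,\dots,w_\ell)\in\Z^\ell$. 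If some $\mathcal{R}_w$ contains two elements $r'\neq r''$, then $j(r'-r'')\in D-D$ for every $j$, which together with $0$ and the symmetry of $D-D$ is the desired progression. Finally $\sum_{w\in\mathcal{W}}|\mathcal{R}_w|=|D|^\ell N/\ell$, where $\mathcal{W}$ is the set of at most $2^\ell N^\ell$ admissible shifts, so some $|\mathcal{R}_w|\geq (|D|/N)^\ell N/(\ell 2^\ell)\geq 2$ as soon as $|D|/N\geq 2(2\ell)^{1/\ell}N^{-1/\ell}$. This pigeonhole on the fibers $\mathcal{R}_w$ is entirely elementary, loses only a constant (the paper recovers Theorem A with $8$ in place of $6$), and is exactly what replaces your unproved count.
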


We remark that if $|A||B|/N^2\geq (\log N)^{-1+\VE}$, for any $\VE>0$, then the conclusion of Theorem A can in fact be strengthened significantly. Using Fourier analytic techniques Green \cite{Green}, improving on previous work of Bourgain \cite{Bourgain}, proved the following

\begin{thmB}[Green \cite{Green}]\label{Green}
Let $A,B\subseteq[1,N]$, then there exist absolute constants $0<c<1<C$ such that if
\[\frac{|A||B|}{N^2}\geq C \frac{(\log\log N)^2}{\log N}\]
then $A+B$ contains an arithmetic progression of length at least $\exp\Bigl(c\left(\frac{|A||B|}{N^2}\log N\right)^{1/2}\Bigr)$. 
\end{thmB}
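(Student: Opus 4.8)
The plan is to argue by Fourier analysis on a cyclic group, following the method of Bourgain and Green. First I would fix a prime $p$ with $2N<p\le4N$ and regard $A,B\subseteq\{1,\dots,N\}$ as subsets of $\Z_p$; since $A+B\subseteq\{2,\dots,2N\}$ there is no wrap-around, so the $\Z_p$-sumset coincides with the integer sumset and it suffices to find a genuine (non-wrapping) progression of the stated length inside $\supp(r)$, where $r=1_A*1_B$ is the representation function and $\widehat r=\widehat{1_A}\,\widehat{1_B}$. Writing $\alpha=|A|/N$, $\beta=|B|/N$, $\delta=\alpha\beta$, the mean of $r$ over $\Z_p$ is $|A||B|/p\asymp\delta p$, and the heuristic driving everything is that $r$ ought to be essentially constant, equal to $\delta p$, the only obstruction being a small set of large Fourier coefficients.

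To exploit this I would introduce, for a parameter $\rho$, the joint spectrum $S=\{t\in\Z_p:\ |\widehat{1_A}(t)|\ge\rho|A|\ \text{or}\ |\widehat{1_B}(t)|\ge\rho|B|\}$, which Parseval bounds by $|S|\lesssim(\rho^2\delta)^{-1}$, together with the Bohr set $\Lambda=\{n\in\Z_p:\ \|nt/p\|<\eta\ \text{for every}\ t\in S\}$. On a translate $x_0+\Lambda$ each character $e(nt/p)$ with $t\in S$ is within $O(\eta)$ of $1$, so the contribution of the spectral frequencies to $r(x_0+n)$ is, up to an error $O(\eta\sqrt\delta\,p)$ (Cauchy--Schwarz over $S$), a constant independent of $n\in\Lambda$; taking $\eta$ a small multiple of $\sqrt\delta$ makes this error $\ll\delta p$. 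The off-spectrum remainder $E(x)=p^{-1}\sum_{t\notin S}\widehat{1_A}(t)\widehat{1_B}(t)\,e(xt/p)$ obeys the mean-square estimate $p^{-1}\sum_{x}|E(x)|^2\ll(\delta p)^2$ as soon as $\rho$ is a small enough multiple of $\sqrt{\max(\alpha,\beta)}$ (use $|\widehat{1_A}(t)|<\rho|A|$ off $S$ together with Parseval for $B$, or symmetrically). Next I would invoke the elementary Dirichlet-box fact that a $k$-dimensional Bohr set of radius $\eta$ in $\Z_p$ contains a genuine progression of length $L\gg(\eta^k p)^{1/(k+1)}$ --- by pigeonhole on $(\R/\Z)^k$ there is a nonzero $d\lesssim(L/\eta)^k$ with $\|dt/p\|<\eta/L$ for every $t\in S$, so $\{0,d,\dots,(L-1)d\}\subseteq\Lambda$ once $(L-1)d<p$ --- whence $\log L\asymp k^{-1}\log N$. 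Finally I would pick a base point $x_0$ with $r(x_0)-E(x_0)\gtrsim\delta p$ (a set of such $x_0$ of size $\gtrsim\max(\alpha,\beta)p$ works, since $\sum_x r(x)=|A||B|$, $r\le\min(|A|,|B|)$, and $E$ is small off a negligible set), so that, writing $r(x_0+n)=\bigl(r(x_0)-E(x_0)\bigr)+E(x_0+n)+O(\eta\sqrt\delta\,p)$, positivity of $r$ along the whole progression follows provided $E$ never drops below $-\tfrac12\delta p$ at any of its $L$ points.

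That last step is the main obstacle, and it is a serious one: the mean-square bound controls $E$ only \emph{on average over $\Z_p$}, whereas I need $|E|<\tfrac12\delta p$ at \emph{every} one of the $L$ points $x_0,x_0+d,\dots,x_0+(L-1)d$. A naive union bound over the $L$ translates of the ``bad'' set $\{x:\ |E(x)|\gtrsim\delta p\}$ is hopelessly lossy --- it yields at best a progression of length polynomial in $\log N$ and collapses entirely when $A$ and $B$ are both sparse --- and in any case the bad set can be arithmetically structured (a low-modulus progression, say), so no count over base points alone can succeed. Closing this gap is the real content: one must either split the off-spectrum frequencies dyadically by size and attach to each scale its own Bohr set on which the corresponding piece of $E$ \emph{is} uniformly small, or run an iteration in which a large deviation of $E$ on a positive proportion of $x_0+\Lambda$ is fed back as additional spectral concentration of $A$ or $B$, producing a density increment on a sub-Bohr-set and terminating after $O(\log(1/\delta))$ steps. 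This more efficient handling of $E$ is exactly what gives Green's exponent $\tfrac12$ in place of the smaller exponent $\tfrac13$ in Bourgain's earlier bound; with $E$ under control, what remains is to track the dimension and radius of the Bohr set through the argument and to observe that the hypothesis $\delta\gtrsim(\log\log N)^2/\log N$ is precisely what keeps $k\log(1/\delta)\ll\log N$, so that the final Bohr dimension is $\asymp(\delta^{-1}\log N)^{1/2}$ and the progression it contains has length $\exp\bigl(c(\delta\log N)^{1/2}\bigr)$, as claimed.
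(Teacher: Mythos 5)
First, note that the paper offers no proof of Theorem B at all: it is quoted verbatim from Green's paper \cite{Green} purely as background, and the elementary counting arguments that the paper actually develops (Theorems \ref{Croot}, \ref{MDS}, \ref{MDPoly}) are of an entirely different character. So there is no in-paper proof to compare yours against; your proposal has to stand on its own as a reconstruction of Green's Fourier-analytic argument, and as written it does not.

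The gap is the one you yourself flag, and it is not a technicality but the entire content of the theorem. Your setup --- passing to $\Z_p$, splitting $\widehat{1_A}\widehat{1_B}$ into a spectral part $S$ and a remainder $E$, making the spectral part nearly constant on a Bohr set $\Lambda$, and extracting a genuine progression of length $\exp\bigl(c(\log p)/|S|\bigr)$ from $\Lambda$ by Dirichlet's box principle --- is the standard frame shared by Bourgain and Green, and it is fine as far as it goes. But the conclusion requires $1_A*1_B(x_0+jd)>0$ for \emph{every} $j\in\{0,1,\dots,L-1\}$, hence a pointwise bound $|E|<\tfrac12\delta p$ along all $L$ points of the progression, whereas the only estimate you establish for $E$ is a mean-square bound over all of $\Z_p$. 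You correctly observe that a union bound over the $L$ translates of the exceptional set is far too lossy and that the exceptional set may itself be arithmetically structured, and you then gesture at two possible repairs (a dyadic decomposition of the off-spectrum frequencies with scale-dependent Bohr sets, or a density-increment iteration) without carrying either one out, without specifying the final choices of $\rho$, $\eta$ and $k=|S|$, and without verifying that the bookkeeping produces the exponent $\tfrac12$ rather than Bourgain's $\tfrac13$. Since the improved handling of $E$ is precisely the innovation that separates Green's theorem from Bourgain's earlier, weaker bound, the proposal stops short of proving the stated result: it is an accurate road map to the easy half of the argument together with an accurate description of where the hard half begins.
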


\comment{
This result vastly improves on bounds of the form $c\log\log\log\log\log N$ that can be obtained by a straightforward application of Gowers' quantitative bounds for Szemer\'edi's theorem\footnote{ \ Szemer\'edi's theorem: Subsets of positive upper density in the integers contain arbitrarily long arithmetic progressions.} \cite{Gowers}, but yet are obtained significantly more easily using arguments that rely on only standard (linear) Fourier analysis. }

Ruzsa \cite{Ruzsa2} has shown that for any $\VE>0$ and $N$ sufficiently large, there exists a set $A\subseteq[1,N]$ with $|A|/N\geq1/2-\VE$
whose sumset $A+A$ does not contain an arithmetic progression of length $\exp\bigl(\left(\log N\right)^{2/3+\VE}\bigr)$.

In this paper we give a slight simplification of the proof of Theorem A that appears in \cite{CRS} and show how this can then be easily adapted to prove the natural higher dimensional generalization of Theorem A (as well as certain polynomial variants). 
%We state our result for higher dimensions here, and save the statements of the other results until we have been able to motivate them.

\comment{In this paper, we generalize the results of Croot, Ruzsa and Schoen in two ways.  In one direction, we consider $A\subset[1,N]^d$.  In another direction, we consider another measure of structure, namely the existence of polynomial configurations in difference sets.  Finally, we are able to incorporate both results into one statement in Theorem \ref{MDPoly}.  We state our result for higher dimensions here, and save the statements of the other results until we have been able to motivate them.}

\comment{
We note that this result is stronger than the theorem of Green (and Bourgain) when $|A||B|/N^2\ll (\log N)^{-1}$, as in this range Green's result does not give a non-trivial bound on the length of the longest arithmetic progression in $A+B$; however, when $|A||B|/N^2\geq (\log N)^{-1+\VE}$, Green's results give a significantly stronger conclusion.}

\comment{
\noindent
{\bf Open Question} (Croot){\bf .} \emph{Fix $m\geq1$. Given $N$, determine the largest $\T\in(0,1)$ such that if $A,B\subseteq[1,N]$ with $|A||B|/N^2\geq N^{-\T}$, then $A+B$ contains an arithmetic progression of length at least $m$.}

It follows from the results in \cite{CRS} that for all $N$ sufficiently large this largest $\T=\T(N)$ satisfies 
\[2/m\ll \T\ll_\VE(\log m)^{-3/2+\VE}.\]
}

\subsection{Multi-dimensional configurations in sparse difference sets}
The main result of this paper is the following quantitative multi-dimensional Szemer\'edi theorem for sparse difference sets. 

\begin{thm}\label{MDS}
Given any collection $v_1,\dots,v_\ell$ of vectors in $\Z^d$ there exists a constant $C_{\ell,d}=C_{\ell,d}(v_1,\dots,v_\ell)$ such that the \emph{difference set} of any set $A\subseteq[1,N]^d$ with 
%\[C_{\ell,d}=C_{\ell,d}(v_1,\dots,v_\ell)=\Bigl(2s\prod_{i=1}^d\prod_{j=1}^\ell\Bigl(1+\frac{\left|\langle v_j,e_i\rangle\right|}{s}\Bigr)\Bigr)^{1/\ell}.\]
\[|A|/N^d\geq C_{\ell,d} N^{-1/\ell}\] 
will be guaranteed to contain some dilate of the original configuration $\{v_1,\dots,v_\ell\}$, that is to say that there will necessarily exist an integer $r>0$ for which
\[\{rv_1,\dots,rv_\ell\}\subseteq A-A.\] 
\end{thm}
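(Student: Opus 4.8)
The plan is to follow the density-increment / iteration strategy of Croot, Ruzsa and Schoen, pushed through in the multidimensional setting. The starting observation is the familiar one: if $A \subseteq [1,N]^d$ contains no dilate of $\{v_1,\dots,v_\ell\}$ in $A-A$, then for every $r > 0$ the sets $A, A - rv_1, \dots, A - rv_\ell$ have empty common intersection (indeed if $x$ lies in all of them then $x, x+rv_1, \dots, x+rv_\ell \in A$, so each $rv_i \in A - A$). Summing over a well-chosen range of $r$ (say $1 \le r \le \rho N$ for a small constant $\rho$ depending on $\max_i \|v_i\|_\infty$, so that all the translates still live in a fixed dilate of the cube) and using inclusion–exclusion together with $\sum_r \big(|A| - \sum_i |A \cap (A - rv_i)|^{} \big) \le 0$-type counting, one extracts an $r$ for which $|A \cap (A - rv_i)|$ is simultaneously large — roughly of size $|A|(1 - O(\ell \cdot \text{(something small)}))$ — for all $i=1,\dots,\ell$. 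This is the combinatorial heart borrowed from \cite{CRS}; the only change is bookkeeping the $\ell$ vectors and the $d$-dimensional geometry of the box.

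Next I would set $A' = A \cap (A - rv_1) \cap \dots \cap (A - rv_\ell)$ for the $r$ just produced. If $A'$ still contains a dilate $\{sv_1,\dots,sv_\ell\}$ of the configuration inside $A' - A'$, then translating back we would find the configuration $\{s v_1, \dots, s v_\ell\}$ together with enough room to conclude; more precisely, the point is that structure in $A' - A'$ transfers to $A - A$. So one is reduced to iterating: either we win at this stage, or $A'$ is a set of density close to that of $A$ (by the simultaneous largeness above) living in a box of the same dimensions, and we repeat. The iteration must be run until the density would be forced below the threshold $C_{\ell,d} N^{-1/\ell}$, at which point a contradiction arises — the number of steps we can afford is governed by how the density decrements, and balancing the decrement per step against the total allowed drop is exactly what produces the exponent $1/\ell$ and the constant $C_{\ell,d}$, which will depend on $v_1,\dots,v_\ell$ through $\max_i \|v_i\|_\infty$ (this controls $\rho$ and hence the length of the $r$-range at each stage).

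The technical point requiring care — and the main obstacle — is keeping the ambient box from shrinking too fast under iteration while still having a genuine range of dilates $r$ to average over. Each time we pass to $A \cap (A - rv_i)$ we are effectively working inside a slightly smaller box (shifted by $rv_i$), and if $r$ is allowed to be as large as a constant times the current side length, the box could collapse after $O(1)$ steps, which is not enough. The fix, as in \cite{CRS}, is to restrict $r$ to a short initial range (length $\asymp N/ (\text{number of iterations})$, or a geometric schedule) so that after all the iterations the total shift is still at most, say, $N/2$; one then verifies that over such a short range the averaging argument still yields a good $r$, because the relevant count $\sum_r |A \cap (A-rv_i)|$ over the short range remains $\gtrsim |A|^2 \cdot (\text{range length}) / N$ by a Cauchy–Schwarz / positivity argument (the difference set being "dense on average"). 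Making these two competing requirements — enough dilates to average, small enough total shift — compatible is precisely where the quantitative exponent $N^{-1/\ell}$ comes from, and it is the one place where the multidimensional and multi-vector features genuinely interact with the iteration count.
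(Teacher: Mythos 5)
There is a genuine gap, and it occurs at the very first step of your scheme. From the hypothesis that $A-A$ contains no dilate you correctly deduce that $A\cap(A-rv_1)\cap\cdots\cap(A-rv_\ell)=\emptyset$ for every $r>0$, but the counting consequence of this is
\[
|A|\;\leq\;\sum_{i=1}^\ell\left|A\setminus(A-rv_i)\right|\;=\;\ell|A|-\sum_{i=1}^\ell\left|A\cap(A-rv_i)\right|,
\]
that is, an \emph{upper} bound $\sum_i|A\cap(A-rv_i)|\leq(\ell-1)|A|$. Your displayed inequality points the wrong way, and to launch the increment you would need a \emph{lower} bound of the form $|A\cap(A-rv_i)|\geq(1-o(1))|A|$ simultaneously in $i$ for some $r$. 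No averaging or Cauchy--Schwarz argument can produce this for sparse sets: summing over $r\in[1,R]$ and applying Cauchy--Schwarz along lines in direction $v_i$ gives only $\sum_r|A\cap(A-rv_i)|\gtrsim R|A|^2/N^d$, so the best $r$ satisfies $|A\cap(A-rv_i)|\gtrsim\delta|A|$ with $\delta=|A|/N^d$. When $\delta\asymp N^{-1/\ell}$ this is a vanishing fraction of $|A|$; the set $A'=A\cap\bigcap_i(A-rv_i)$ can then have density as small as $\delta^{\ell+1}$, and the iteration dies after one step. This is not a bookkeeping issue: density-increment schemes are what produce thresholds of the shape $(\log N)^{-c}$ (as in Theorem B), and they cannot reach polynomial densities like $N^{-1/\ell}$.

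The paper's argument (and the Croot--Ruzsa--Schoen argument it adapts) is not an iteration but a one-shot first-moment count in a ``fiber'' variable. For each $\ell$-tuple of shifts $w=(w_1,\dots,w_\ell)\in(\Z^d)^\ell$ one sets $\mathcal{R}_w=\{r\in[1,N/s]\,:\,rv_j+w_j\in A \text{ for all } j\}$ with $s=\max_j\|v_j\|_\infty$, notes that $\sum_{w\in\mathcal{W}}|\mathcal{R}_w|=|A|^\ell N/s$ while the relevant collection $\mathcal{W}$ of shifts has size $O_{v}(N^{d\ell})$, and concludes by pigeonhole that some $\mathcal{R}_w$ contains two elements $r'\neq r''$; then $(r'-r'')v_j\in A-A$ for every $j$. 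The exponent $1/\ell$ comes from comparing $|A|^\ell N$ with $N^{d\ell}$, not from balancing an iteration against a shrinking box. You should abandon the increment strategy and argue along these lines.
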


\begin{rem} Since $A-A$ is symmetric it will also contain the reflection in the origin of this configuration, namely the configuration $\{-rv_1,\dots,-rv_\ell\}$.
\end{rem}

A simple averaging argument (that we will make use of again and again) allows us to deduce, from Theorem \ref{MDS}, %the following natural higher dimensional generalization of Theorem \ref{CRS}.
the following structural result for the sumset $A+B$ of two given sets $A,B\subseteq[1,N]^d$.

\begin{cor}\label{corMDS}
Let $v_1,\dots,v_\ell\in\Z^d$ and $A,B\subseteq[1,N]^d$.
If $|A||B|/N^{2d}\geq 2^dC_{\ell,d} N^{-1/\ell}$, then there exists $r\ne0$ and $t\in[2,2N]^d$ such that \[t\pm r\cdot\{v_1,\dots,v_\ell\}\subseteq A+B.\]
\end{cor}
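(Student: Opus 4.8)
The plan is to reduce Corollary \ref{corMDS} to Theorem \ref{MDS} by means of a pigeonholing step that replaces the pair $A,B$ with a single dense set. First I would note that $A+B\subseteq[2,2N]^d$, and record the elementary counting identity
\[
\sum_{t\in[2,2N]^d}\bigl|A\cap(t-B)\bigr|=\sum_{a\in A}\#\{t\in[2,2N]^d:\ t-a\in B\}=|A|\,|B|.
\]
Since the number of admissible $t$ is $(2N-1)^d<2^dN^d$, there must exist some $t\in[2,2N]^d$ for which the set $D:=A\cap(t-B)$ satisfies $|D|\ge|A|\,|B|/2^dN^d$.

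Combining this lower bound with the hypothesis $|A|\,|B|/N^{2d}\ge 2^dC_{\ell,d}N^{-1/\ell}$ gives $|D|/N^d\ge C_{\ell,d}N^{-1/\ell}$; moreover $D\subseteq A\subseteq[1,N]^d$, so Theorem \ref{MDS} applies to $D$ and supplies an integer $r>0$ with $\{rv_1,\dots,rv_\ell\}\subseteq D-D$. It then remains to transfer structure in $D-D$ back to $A+B$: if $w=d_1-d_2$ with $d_1,d_2\in D$, then $d_1\in A$ and $t-d_2\in B$, whence $t+w=d_1+(t-d_2)\in A+B$, and symmetrically $t-w=d_2+(t-d_1)\in A+B$. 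Taking $w=rv_i$ for $i=1,\dots,\ell$ yields $t\pm r\cdot\{v_1,\dots,v_\ell\}\subseteq A+B$ with $r\ne0$, as desired.

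I do not expect a genuine obstacle here: all of the real content is in Theorem \ref{MDS}, and this deduction is precisely the ``simple averaging argument'' advertised in the text. The only points needing mild attention are the bookkeeping of constants — the crude bound $(2N-1)^d<(2N)^d$ on the number of translates $t$ is exactly what forces the extra factor $2^d$ in the hypothesis — and the observation that $D$, being a subset of $A$, automatically inherits the inclusion $D\subseteq[1,N]^d$ required to invoke Theorem \ref{MDS}.
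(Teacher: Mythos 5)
Your argument is correct and is essentially identical to the paper's proof: the same counting identity $\sum_{t}|B\cap(t-A)|=|A||B|$ (you take $D=A\cap(t-B)$, which is the same up to swapping the roles of $A$ and $B$), the same pigeonhole bound $|D|\geq|A||B|/(2N-1)^d$, and the same transfer $D-D+t\subseteq A+B$ after invoking Theorem \ref{MDS}. Your explicit verification of both signs in $t\pm r\cdot\{v_1,\dots,v_\ell\}$ just spells out what the paper leaves implicit via the symmetry of $D-D$.
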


\begin{proof}%[Proof of Corollary \ref{1}]
Since
\[\sum_{t\in[2,2N]^d}\left|B\cap(t-A)\right|=|A||B|\]
it follows that there exists $t\in[2,2N]^d$ such that if we set $D=B\cap(t-A)$, then
\[|D|\geq \frac{|A||B|}{(2N-1)^d}.\]
The result therefore follows immediately from Theorem \ref{MDS} since
$D-D+t\subseteq  A+B$.
\end{proof}

\begin{rem}[On the constant $C_{\ell,d}$ in Theorem \ref{MDS} and Corollary \ref{corMDS}]
Given any collection $v_1,\dots,v_\ell$ of vectors in $\Z^d$ let $s$ denote the size of the largest projection of these given vectors onto the coordinate axes. In other words we define
\be\label{s1}
s=s(v_1,\dots,v_\ell)=\max_{1\leq j\leq\ell}\|v_j\|_\infty=
%\max_{\substack{1\leq j\leq\ell \\ 1\leq i\leq d}}\left|\langle v_j,e_i\rangle\right|
\max\left\{\,\left|\langle v_j,e_i\rangle\right|\,:\,1\leq j\leq\ell,\, 1\leq i\leq d\right\},
\ee
where $\{e_i\}_{1\leq i\leq d}$ denotes the standard basis vectors for $\Z^d$.

It is clear that in order to obtain a non-trivial conclusion in Theorem \ref{MDS} and Corollary \ref{corMDS} we must have $r\in[1,N/s]$, and in particular $N\geq s$ (a fact that will be forced on us by the choice of constant $C_{\ell,d}$). 

In the proof of Theorem \ref{MDS}, which we present in Section \ref{proofs}, we will see that we can in fact take
\be
C_{\ell,d}:=\Bigl(2s\prod_{i=1}^d\prod_{j=1}^\ell\Bigl(1+\frac{\left|\langle v_j,e_i\rangle\right|}{s}\Bigr)\Bigr)^{1/\ell}\leq 2^d(2s)^{1/\ell}.
\ee

It follows that we are able to recover Theorem A, the structural result for sparse sumsets of Croot, Ruzsa and Schoen, with only marginally weaker bounds (the constant 6 replaced with 8) as a special case of Corollary \ref{corMDS}, namely the special case $d=1$ with $v_j=j$. 

Finally, it is also easy to see that for specific (and interesting) choices of vectors $v_1,\dots,v_\ell\in\Z^d$ the true value of $C_{\ell,d}$ is significantly smaller than the trivial upper bound of $2^d(2s)^{1/\ell}$. For example, in the case when $\ell=d$ and $v_j=e_j$ %for each $1\leq j\leq d$ 
(a $d$-dimensional corner), we have $C_{\ell,d}\leq 2^{1+1/\ell}$.
\end{rem}

\subsection{Outline of the paper}
In Section \ref{proofCroot} we present a proof of Theorem \ref{MDS} in the special case when $d=1$ with $v_j=j$. As a corollary of this result (and some known quantitative results on polynomial patterns in difference sets) we give quantitative bounds on the size of a set $A\subseteq[1,N]$ that will ensure that its difference set contains a long arithmetic progression whose common difference is a perfect square. %Although this result can be extracted from the paper of Croot, Ruzsa and Schoen \cite{CRS} we feel that the argument we present is perhaps slightly simpler and easier to generalize.

In Section \ref{polyvar} we formulate a polynomial generalization of Theorem \ref{MDS}, namely Theorem \ref{MDPoly}, %(which contains as a special case a weak variant of Theorem C for sparse difference sets) 
and present a proof of this result in a special case (Theorem \ref{SinglePoly}). 

Finally, in Section \ref{proofs} we prove Theorem \ref{MDS} and sketch the proof of Theorem \ref{MDPoly}.

\section{Arithmetic progressions in sparse difference sets}\label{proofCroot}

%The following result exemplifies the type of \emph{quantitative} structure theorems that one can obtain, using only simple combinatorial arguments, in the \emph{finite} setting of subsets of $[1,N]=\{1,2,\dots,N\}$. 

\subsection{A special case of Theorem \ref{MDS}}
Although the following result can be extracted from Croot, Ruzsa and Schoen \cite{CRS} we feel that the argument below is perhaps slightly simpler and easier to generalize.

\begin{thm}[A special case of Theorem \ref{MDS}]\label{Croot}
Let $A\subseteq[1,N]$ and %$\D(A)=|A|/N$.
$m\geq3$ be an odd positive integer.

If $|A|/N\geq 4 N^{-2/(m-1)}$, then $A-A$ contains an arithmetic progression of length at least $m$. 
\end{thm}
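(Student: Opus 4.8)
The plan is to follow the Croot--Ruzsa--Schoen strategy, which rests on a simple but clever combinatorial pigeonholing. Fix $A \subseteq [1,N]$ with $|A|/N \geq 4N^{-2/(m-1)}$ and write $\alpha = |A|/N$. The key object is, for each common difference $r$, the correlation sum $|A \cap (A+r)|$, or equivalently the number of pairs $(a,a') \in A \times A$ with $a - a' = r$. The progression $\{0, r, 2r, \dots, (m-1)r\}$ lies in $A - A$ precisely when $r$, $2r$, \dots, $(m-1)r$ are all in $A - A$; so it suffices to find some $r \neq 0$ such that $|A \cap (A + jr)| > 0$ for every $j \in \{1, \dots, m-1\}$. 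Since reflection gives $-(m-1)r, \dots, (m-1)r \subseteq A - A$ and $m$ is odd, it actually suffices to produce $r$ with $kr \in A - A$ for $k = 1, \dots, (m-1)/2$, halving the range we must control — this is the reason for the oddness hypothesis and the $(m-1)/2$ in the exponent.

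The first main step is to restrict attention to a short interval of admissible $r$'s and count. Suppose, for contradiction, that for every $r$ in a suitable range there is some $k \in \{1, \dots, (m-1)/2\}$ with $kr \notin A - A$, i.e.\ $A$ and $A + kr$ are disjoint. The idea is then to sum $|A \cap (A + s)|$ over $s$ in an arithmetic-progression-like set and exploit the fact that many such correlations vanish. Concretely I would consider, for a parameter $R$ to be chosen, the count
\[
\sum_{r=1}^{R} \sum_{k=1}^{(m-1)/2} |A \cap (A + kr)|.
\]
On one hand, if the theorem fails then for each $r$ at least one inner term is zero, so this double sum is at most $(\frac{m-1}{2} - 1)$ times a sum of correlations, which one bounds crudely by noting $\sum_{s} |A \cap (A+s)| $ over $|s| \leq R(m-1)/2$ is at most $|A|^2$... but a cleaner route, and the one I expect the paper takes, is to instead bound the number of "bad" $r$ directly: each $k$ with $kr \notin A-A$ forces $kr$ to avoid the difference set $A-A$, and $|A - A| \geq |A|$ is too weak, so instead one uses that the number of $r \leq R$ with $kr \notin A - A$ for a fixed $k$ is at most $R - \lfloor R \cdot (\text{density of } A-A \text{ in progressions of step } k)\rfloor$. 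The efficient version: count pairs — $\sum_{r \le R}\sum_k \mathbf{1}[kr \in A-A] = \sum_k |\{r \le R : kr \in A-A\}|$, and bound each inner cardinality below by a second-moment/Cauchy--Schwarz estimate on $|A \cap (A+s)|$ as $s$ ranges over multiples of $k$.

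The second main step is the quantitative bookkeeping that makes $R \asymp N$ and produces the density threshold $4N^{-2/(m-1)}$. Here one chooses $R$ so that on the one hand progressions of length $m$ with common difference $\le R$ fit inside $[1,N]$ (forcing $R \le N/(m-1)$ or so), and on the other hand the counting above yields at least one $r \le R$ that is "good for all $k$" as soon as $\alpha^{?}$ beats $R^{-1}$; matching exponents gives the $N^{-2/(m-1)}$ shape, and tracking the constant carefully yields the $4$. I expect the main obstacle to be precisely this constant-tracking and the choice of the auxiliary range $R$: one must be careful that the "short interval" on which we dilate does not overflow $[1,N]$ after multiplying by $(m-1)/2$, and that the pigeonhole is set up so that a single bad $k$ per $r$ is genuinely incompatible with the density hypothesis. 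The averaging/Cauchy--Schwarz input itself (a lower bound of the form $\sum_s |A \cap (A+s)|^2 \gg |A|^4/(\text{length})$, or a first-moment bound $\sum_{|s| \le H}|A\cap(A+s)| \ge |A|^2 - |A| H / \text{something}$) is routine; the art is in combining it with the reflection trick to halve the range and then optimizing $R$.
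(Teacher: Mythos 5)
Your reduction is the right one: writing $m=2\ell+1$ with $\ell=(m-1)/2$, by the symmetry of $A-A$ it suffices to find $r\ne 0$ with $jr\in A-A$ for $j=1,\dots,\ell$. But the counting mechanism you sketch for producing such an $r$ cannot work at the stated density, and the one idea that makes the Croot--Ruzsa--Schoen argument go is missing. Both of your proposed routes are first-moment arguments in the variable $r$: either sum $|A\cap(A+kr)|$ over $r$ and $k$ and hope that one vanishing term per $r$ yields a contradiction, or lower-bound $\sum_k|\{r\le R: kr\in A-A\}|$ and pigeonhole. Neither can succeed when $\alpha=|A|/N\approx N^{-1/\ell}$: for each fixed $k$ the set $\{r\le R: kr\in A-A\}$ can have density as small as roughly $\alpha$ (since $|A-A|$ can be as small as $2|A|-1$), so the sum over $k$ is only about $\ell\alpha R\ll R$, far short of the $(\ell-1)R$ needed to force a single $r$ that is good for \emph{every} $k$ simultaneously; a union bound over the complements is equally hopeless since each complement has density close to $1$. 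Second-moment or energy estimates for each $k$ separately do not repair this, because the difficulty is the simultaneity over $k$, not the size of any individual correlation. There is also no auxiliary range $R$ to optimize in the actual proof.

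The missing idea is to pigeonhole over an auxiliary tuple of translates rather than over $r$. For $w=(w_1,\dots,w_\ell)\in\Z^\ell$ set $\mathcal{R}_w=\{r\in[1,N/\ell]: jr+w_j\in A \text{ for all } 1\le j\le\ell\}$. Two observations drive the proof. First, if some $\mathcal{R}_w$ contains two distinct elements $r',r''$, then $j(r'-r'')=(jr'+w_j)-(jr''+w_j)\in A-A$ for every $j$, which is exactly the target with $r=r'-r''$. Second, the total count factorizes exactly: $\sum_{w}|\mathcal{R}_w|=\sum_r\prod_{j=1}^\ell|A-jr|=|A|^\ell\lfloor N/\ell\rfloor$, since for fixed $r$ each coordinate $w_j$ independently ranges over the $|A|$ elements of $A-jr$. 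Restricting to the set $\mathcal{W}$ of tuples for which $\mathcal{R}_w$ can be nonempty, one checks $|\mathcal{W}|\le N^\ell\prod_{j=1}^\ell(1+j/\ell)\le 2^\ell N^\ell$, so some fiber satisfies $|\mathcal{R}_w|\ge(|A|/N)^\ell N/(\ell 2^\ell)$, which is at least $2$ once $|A|/N\ge 2(2\ell)^{1/\ell}N^{-1/\ell}$, and $2(2\ell)^{1/\ell}\le 4$. This is where the exponent $N^{-2/(m-1)}$ and the constant $4$ come from.
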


\begin{proof}[Proof of Theorem \ref{Croot}]
Let $m=2\ell+1$. For each
$w=(w_1,\dots,w_\ell)\in\Z^\ell$ we define\footnote{ \ Of course $N/\ell$ need not be an integer, however here, and in the remainder of this article, we will make the slight (but convenient) abuse of notation of identifying $[1,x]$ with $[1,\lfloor x \rfloor]$ for any given positive real number $x$.}
\[\mathcal{R}_w=\{r\in[1,N/\ell]\,:\,jr+w_j\in A \ (1\leq j\leq\ell)\}\]
and note that if, for some $w\in\Z^\ell$, there exist $r',r''\in\mathcal{R}_w$ with $r'\ne r''$, then it follows immediately that
\[%jr',jr''\in A-w_j\quad\Longleftrightarrow\quad 
j(r'-r'')\in A-A\]
%\[j(r'-r'')\in (A-w_j)-(A-w_j)=A-A\]
for each $1\leq j\leq\ell$ and hence, utilizing the fact that $A-A$ is symmetric, that the difference set $A-A$ contains an arithmetic progression of length $2\ell+1$.

It therefore suffices to establish the existence of a $w\in\Z^\ell$ such that $|\mathcal{R}_w|\geq2$.
In order to do this we (naturally) restrict our attention to those $w$ for which $\mathcal{R}_w$ has at least a chance of being non-empty, namely 
\[\mathcal{W}=\{w\in\Z^\ell\,:\,1-jN/\ell\leq w_j\leq N-1 \ (1\leq j\leq\ell)\},\]
and note that 
\[|\mathcal{W}|\leq N^\ell\prod_{j=1}^\ell(1+j/\ell)\leq 2^\ell N^\ell.\] 
Since the average
\[\frac{1}{|\mathcal{W}|}\sum_{w\in\mathcal{W}}|\mathcal{R}_w|=\frac{1}{|\mathcal{W}|}\sum_{w\in\mathcal{W}}\sum_{r=1}^{N/\ell}\prod_{j=1}^\ell 1_{A}(jr+w_j)=\frac{1}{|\mathcal{W}|}|A|^\ell\frac{N}{\ell}\]
it follows that there must exist a $w\in\mathcal{W}$ such that
\[|\mathcal{R}_w|\geq\left(\frac{|A|}{N}\right)^\ell\frac{N}{\ell2^\ell}\]
and consequently, for this choice of $w$, that the set $\mathcal{R}_w$ will contain at least 2 elements provided 
\[\frac{|A|}{N}\geq C_\ell\frac{1}{N^{1/\ell}}\]
where $C_\ell=2(2\ell)^{1/\ell}$.
It is an easy (calculus) exercise to finally show that $2\leq C_\ell\leq 4$.
\end{proof}

It is clear that the arguments presented above are flexible enough to be applied almost verbatim to more general situations and at this point the reader is encouraged to prove Theorem \ref{MDS} for herself (the details can be found in Section \ref{proofs}). 

We now turn our attentions to the problem of finding polynomial configurations in difference sets. 

%\section{A polynomial variant of Theorem \ref{Croot}}

%\section{Polynomial configurations in difference sets}
\subsection{Some remarks on polynomial configurations in difference sets}

The following result, a quantitative polynomial Szemer\'edi theorem for difference sets, can be established by a careful application of standard Fourier analytic (circle method) techniques, see \cite{LM} and \cite{LM2}.

\begin{thmC}[Lyall and Magyar \cite{LM2}]\label{LM}
%Let $N$ is a positive integer. 
Let $P_1,\dots,P_\ell\in\Z[r]$ with each $P_j(0)=0$ and $k=\max_j \deg P_j\geq2.$

There exists an absolute constant $C=C(P_1,\dots,P_\ell )$ such that for any $A\subseteq[1,N]$ with
\[\frac{|A|}{N}\geq C\left(\frac{(\log\log N)^2}{\log N}\right)^{1/\ell(k-1)}\]
there necessarily exist $r\ne0$ such that
\[\{P_1(r),\dots ,P_\ell (r)\}\subseteq A-A.\]
\end{thmC}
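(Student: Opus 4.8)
The statement in question is Theorem C (Lyall and Magyar), which is quoted rather than proved in this paper; nonetheless, here is how I would approach a proof. The plan is to run the density-increment / circle method argument familiar from quantitative polynomial Szemer\'edi theorems for difference sets. First I would set up the counting operator
\[
\Lambda(A) = \frac{1}{N}\sum_{r=1}^{N^{1/k}}\sum_{n}1_A(n)1_A(n+P_1(r))\cdots 1_A(n+P_\ell(r)),
\]
suitably normalised so that $\Lambda$ measures the number of ``difference-set polynomial configurations'' $\{P_1(r),\dots,P_\ell(r)\}\subseteq A-A$ (one has to be a little careful: since it is a difference set, only one function $1_A$ truly carries density and the others can be replaced by $1_A$ itself, which is what makes the exponent $1/\ell(k-1)$ rather than something worse). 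The goal is a dichotomy: either $\Lambda(A)$ is close to the ``expected'' main term, in which case we are done, or there is an $L^\infty$-type concentration of the Fourier transform $\widehat{1_A}$ on a single major-arc-type frequency, which yields a density increment of $A$ on a long arithmetic progression.

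The key steps, in order, would be: (1) a Fourier expansion of $\Lambda(A)$ and a Weyl-type / van der Corput estimate showing that the minor-arc contribution is controlled by $\sup_{\alpha \text{ minor}}|\widehat{1_A}(\alpha)|$ times a power saving in $N$ coming from Weyl sums for the polynomials $P_j$ — this is where $k=\max_j\deg P_j\geq 2$ and the exponent $k-1$ enter, via the standard Weyl inequality loss of $N^{1/2^{k-1}}$ or (with Weyl differencing done more efficiently) a cleaner power; (2) a major-arc analysis producing the main term, which is a positive constant depending on $P_1,\dots,P_\ell$ times $(|A|/N)^{\ell+1}$-ish, requiring a lower bound on a singular series and singular integral; (3) if the minor arc term dominates, extract a frequency $\alpha = a/q$ with small $q$ on which $|\widehat{1_A}(\alpha)|$ is large, and run a standard $L^\infty$-density-increment: partition $[1,N]$ into progressions of common difference $q$ and length $\sim N^{c}$ on which the linear phase $e(\alpha x)$ is roughly constant, to find a sub-progression on which $A$ has density $\geq (1+c')|A|/N$; (4) iterate — the density cannot exceed $1$, so after $O(\log(N/|A|))$ steps we must land in the main-term case, and one tracks how the length of the ambient progression shrinks at each step to read off the final density threshold.

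The main obstacle, and the source of the somewhat unusual $(\log\log N)^2/\log N$ factor, is step (4): the bookkeeping in the iteration. Each density increment costs a polynomial factor in the length of the progression, and one needs the progression to remain long enough (at least $N^{\epsilon}$, say) for the Weyl estimates in step (1) to still have teeth after passing to the sub-progression (where $P_j(r)$ restricted to an AP is again a polynomial of the same degree, up to affine change of variables). Optimising the trade-off between the number of iterations and the length loss per iteration — this is essentially the argument of Bourgain and of Green for $3$-term progressions, adapted to the polynomial setting — is what forces the precise shape of the bound; a cruder iteration would give only an iterated-logarithm density threshold. I would expect steps (1)--(3) to be routine (modulo the technical Weyl-sum estimates for systems of polynomials, for which one cites the relevant literature, e.g.\ the circle method treatment in \cite{LM},\cite{LM2}), and step (4) to be where all the care is needed.
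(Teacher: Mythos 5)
First, a point of orientation: the paper does not prove Theorem~C at all. It is quoted from Lyall and Magyar \cite{LM2}, with only the remark that it ``can be established by a careful application of standard Fourier analytic (circle method) techniques.'' So your sketch is being measured against the cited literature rather than against anything in this paper. At that level, your overall architecture --- Weyl-sum minor-arc estimates, a major-arc main term, and an iteration whose bookkeeping produces the $(\log\log N)^2/\log N$ shape, with the exponent $1/\ell(k-1)$ coming from Weyl differencing in degree $k$ compounded over an $\ell$-fold product --- is the right family of argument.

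There are, however, two concrete gaps. The counting operator you write down,
\[
\Lambda(A)=\frac{1}{N}\sum_{r}\sum_{n}1_A(n)1_A(n+P_1(r))\cdots 1_A(n+P_\ell(r)),
\]
counts the wrong pattern: it demands a single base point $n$ with $n+P_j(r)\in A$ for every $j$ simultaneously, which is the (much harder) polynomial Szemer\'edi configuration, not the difference-set statement. The conclusion $\{P_1(r),\dots,P_\ell(r)\}\subseteq A-A$ only asks that for each $j$ separately there exist $a_j,a_j'\in A$ with $a_j-a_j'=P_j(r)$; the relevant operator is therefore of the form $\sum_r\prod_{j=1}^\ell\bigl(\sum_n 1_A(n)1_A(n+P_j(r))\bigr)$, a product over $j$ of pair counts, and it is exactly this decoupling over $j$ that produces the factor $\ell$ in the exponent. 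For $\ell\geq2$ your operator is not in general controlled by $\sup_\alpha|\widehat{1_A}(\alpha)|$ (three or more points over a common base point generically require higher-order uniformity norms), and such common-base-point configurations are not known to occur at densities anywhere near the threshold in Theorem~C; your parenthetical remark that ``the others can be replaced by $1_A$ itself'' does not repair this. Second, your sketch never invokes the hypothesis $P_j(0)=0$, yet the theorem is false without it (take $P(r)=2r+1$ and $A$ the even integers in $[1,N]$). That hypothesis enters precisely at the step you flag as delicate: after a density increment one passes to a progression of modulus $q$, and one must restrict $r$ to multiples of (a power of) $q$ so that the rescaled polynomials again lie in $\Z[r]$ with zero constant term; without $P_j(0)=0$ the divisibility needed for this rescaling is unavailable and the iteration cannot close.
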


In the case of a single polynomial ($\ell=1$), this result was also obtained by Lucier \cite{Lucier} and, to the best of our knowledge, constitutes the best bounds that are currently known for arbitrary polynomials with integer coefficients and zero constant term. However, using some rather involved Fourier arguments, Pintz, Steiger and Szemer\'edi in \cite{PSS} were able to establish the following impressive result for square differences.

\begin{thmD}[Pintz, Steiger and Szemer\'edi \cite{PSS}]\label{PSS}
%Let $N$ is a positive integer. 
Let $A\subseteq[1,N]$ and $m\geq 3$ be a positive integer. If
\[\frac{|A|}{N}\geq C\left(\frac{1}{\log N}\right)^{c\log\log\log\log N}\]
then there necessarily exist $r\ne0$ such that $r^2\in A-A.$
\end{thmD}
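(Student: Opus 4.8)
The plan is to prove the contrapositive --- that any set $A\subseteq[1,N]$ whose difference set avoids every nonzero perfect square must be sparse --- by a Fourier-analytic density increment of the kind introduced by S\'ark\"ozy (the integer $m\ge3$ in the statement plays no role in the conclusion). Write $\delta=|A|/N$, put $e(\theta)=e^{2\pi i\theta}$, $f(\theta)=\sum_{n\in A}e(n\theta)$ and $S(\theta)=\sum_{1\le r\le\sqrt{N/2}}e(r^2\theta)$; the assumption that $r^2\notin A-A$ for every $r\ne0$ is then precisely the assertion
\be
\int_0^1|f(\theta)|^2\,\overline{S(\theta)}\,d\theta=\sum_{1\le r\le\sqrt{N/2}}\#\{\,n\,:\,n\in A,\ n+r^2\in A\,\}=0 .
\ee
First I would run a standard major/minor arc dissection at a suitable level $Q$. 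On the minor arcs, Weyl's inequality bounds $|S(\theta)|$ strongly enough that their contribution to the integral is $o(\delta^2N^{3/2})$, whereas the ``expected'' value of the whole integral is of order $\delta^2N^{3/2}$; since the true value is $0$, the major arcs must carry a term of size $\gtrsim\delta^2N^{3/2}$ of the opposite sign. Feeding in the classical Gauss-sum approximation $S(a/q+\beta)\approx q^{-1}g(a,q)\,v(\beta)$ valid on each major arc, and estimating the resulting sum after organising the arcs by the size of $q$, one concludes that $|f|$ must be abnormally large near some rational $a/q$ with $q\le Q$.

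Next I would convert this large exponential sum into a density increment. A large value of $|f(a/q+\beta)|$ for small $|\beta|$ forces $A$ to be unusually concentrated on an arithmetic progression of common difference $q$; after a pigeonhole refinement we may take the common difference to be the \emph{perfect square} $q^2$, so that restricting $A$ to that progression and rescaling produces a set $A'\subseteq[1,N']$ of density $\delta'\ge\delta(1+c)$ for an absolute constant $c>0$. The purpose of insisting on a square common difference is that $A'-A'$ then \emph{still} contains no perfect square, so the hypothesis is inherited and the whole procedure can be iterated.

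The hard part will be running this iteration efficiently, and this is the technical heart of \cite{PSS}. A crude implementation of the above loses a substantial factor from $N$ and gains only a bounded factor of density at each step, so that when it is run to exhaustion it reproduces at best a bound of the strength of S\'ark\"ozy's theorem, roughly $\delta\ll(\log\log N)^{O(1)}(\log N)^{-1/3}$. To reach the much smaller threshold claimed in Theorem D one needs (i) major-arc estimates sharp enough that each step is very nearly lossless in the size parameter $N$, and (ii) correspondingly delicate control of how the density accumulates, so that the induction can be sustained for as many as $\asymp\log\log\log\log N$ steps before the running density would be forced past $1$. Getting (i) and (ii) to work together --- in effect running S\'ark\"ozy's scheme on a progression whose length stays close to $N$ throughout a very long induction --- is where I would expect essentially all of the effort to lie.
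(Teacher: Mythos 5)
This statement is not proved in the paper at all: Theorem D is quoted verbatim from Pintz, Steiger and Szemer\'edi \cite{PSS} and used as a black box (its only role here is as input to Corollary \ref{CorPSS}). So there is no in-paper argument to compare against, and your proposal has to stand on its own as a proof of the cited result. As such it does not: it is an accurate road map of the S\'ark\"ozy-style density-increment strategy (the orthogonality identity, the major/minor arc dissection, the pigeonhole to a progression with square common difference so that the hypothesis is inherited), but you explicitly defer ``the technical heart of \cite{PSS}'', which is exactly the part that separates the stated bound from S\'ark\"ozy's much weaker one. An outline that names the hard step and sets it aside is not a proof of the theorem.

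Moreover, the quantitative skeleton you do supply cannot produce the claimed bound. If each iteration only multiplies the density by a fixed factor $1+c$ and you can afford $\asymp\log\log\log\log N$ iterations, then the density you can push past $1$ satisfies $\delta^{-1}\leq(1+c)^{\log\log\log\log N}=(\log\log\log N)^{O(1)}$, which is astronomically far from $\delta^{-1}\leq(\log N)^{c\log\log\log\log N}$; taking logarithms, the theorem requires the total multiplicative gain over the whole iteration to be $\exp\bigl(c(\log\log N)(\log\log\log\log N)\bigr)$, so with only quadruple-log many rounds each round must gain a factor comparable to a power of $\log N$, not a constant. Establishing an increment of that strength (in \cite{PSS} this is done by showing that the absence of any single huge Fourier coefficient forces a rapidly proliferating family of large coefficients at distinct frequencies, which after quadruple-log many rounds violates a Parseval-type bound) is the missing idea, and without it the contrapositive you set up terminates at a S\'ark\"ozy-type threshold rather than at the bound in Theorem D. Also note that the hypothesis ``$m\geq3$'' is indeed vestigial in the statement as quoted, as you observe, so no issue there.
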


We note that it is conjectured that for any $\E>0$ and $N$ sufficiently large there exists a set $A\subseteq[1,N]$ with 
$|A|\geq N^{1-\E}$ that contains no square differences. Ruzsa \cite{Ruzsa} has demonstrated this for $\E=0.267$. 

Arguing as in the proof of Theorem \ref{Croot} we can deduce, from Theorem D, the following result pertaining to arithmetic progressions with square differences.

\begin{cor}\label{CorPSS}
%Let $N$ is a positive integer. 
Let $A\subseteq[1,N]$ and $m\geq 3$ be a positive integer. If
\[\frac{|A|}{N}\geq C'\left(\frac{1}{\log N}\right)^{2c'\log\log\log\log N/(m-1)}\]
then $A-A$ contains an arithmetic progression of length at least $m$ with common difference $r^2$ (with $r\in\N$).
\end{cor}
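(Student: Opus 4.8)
The plan is to combine the averaging argument from the proof of Theorem~\ref{Croot} with Theorem~D, the point being that Theorem~D should be applied not to $A$ itself but to the parameter set that the averaging argument produces. Write $m=2\ell+1$ (for even $m$ one instead takes $\ell=\lceil(m-1)/2\rceil$, so that the progression of length $2\ell+1\geq m$ produced below still does the job after a harmless adjustment of constants). For $w=(w_1,\dots,w_\ell)\in\Z^\ell$ set
\[
\mathcal{R}_w=\{r\in[1,N/\ell]\,:\,jr+w_j\in A\ (1\leq j\leq\ell)\},
\]
and recall that the averaging over $w\in\mathcal{W}=\{w\in\Z^\ell:1-jN/\ell\leq w_j\leq N-1\ (1\leq j\leq\ell)\}$ carried out in the proof of Theorem~\ref{Croot} produces a $w$ with $|\mathcal{R}_w|\geq(|A|/2N)^\ell(N/\ell)$, that is, a $w$ for which $\mathcal{R}_w$ has density at least $(|A|/2N)^\ell$ inside $[1,N/\ell]$.

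Now I would apply Theorem~D to this set $\mathcal{R}_w\subseteq[1,N/\ell]$ (i.e. Theorem~D with $N/\ell$ in place of $N$): provided the density of $\mathcal{R}_w$ exceeds the corresponding Pintz--Steiger--Szemer\'edi threshold, there is an integer $s\neq0$ with $s^2\in\mathcal{R}_w-\mathcal{R}_w$, say $s^2=r'-r''$ with $r',r''\in\mathcal{R}_w$. Since $jr'+w_j\in A$ and $jr''+w_j\in A$ for every $1\leq j\leq\ell$, subtracting gives $js^2=j(r'-r'')\in A-A$ for all such $j$; as $A-A$ is symmetric and contains $0$, the set $\{-\ell s^2,\dots,-s^2,0,s^2,\dots,\ell s^2\}$ is an arithmetic progression of length $2\ell+1\geq m$, contained in $A-A$, whose common difference $s^2$ is a perfect square. (Note $s^2\leq N/\ell$ automatically, so all these terms lie in $[-N,N]$ as they must.)

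It then remains only to unwind the hypothesis. Feeding $|\mathcal{R}_w|/(N/\ell)\geq(|A|/2N)^\ell$ into the requirement of Theorem~D on $[1,N/\ell]$, taking $\ell$-th roots, and using $\log(N/\ell)\geq\tfrac12\log N$ together with $\log\log\log\log(N/\ell)\leq\log\log\log\log N$ for $N$ large in terms of $m$, one arrives at a sufficient condition of precisely the stated shape
\[
\frac{|A|}{N}\geq C'\left(\frac{1}{\log N}\right)^{2c'\log\log\log\log N/(m-1)}
\]
for suitable absolute constants $C',c'$; the exponent $2/(m-1)$ is exactly $1/\ell$ when $m$ is odd, mirroring the exponent $1/\ell$ that already appears in Theorem~\ref{Croot}. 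I do not expect a genuine obstacle here: the argument is a one-line modification of the proof of Theorem~\ref{Croot} once one observes that Theorem~D may be invoked on $\mathcal{R}_w$ in place of the step ``pick two distinct elements of $\mathcal{R}_w$'', and all that is left is the routine bookkeeping above --- in particular checking that replacing $\log(N/\ell)$ by $\log N$, and the parity adjustment for even $m$, can both be absorbed into $C'$ and $c'$.
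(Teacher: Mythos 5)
Your proposal is correct and follows essentially the same route as the paper: the same averaging over $w$ from the proof of Theorem~\ref{Croot} to produce a dense $\mathcal{R}_w\subseteq[1,N/\ell]$, followed by an application of Theorem~D to $\mathcal{R}_w$ rather than to $A$, and the same unwinding of the density condition. The only additions are cosmetic (the explicit parity adjustment for even $m$ and the explicit $\log(N/\ell)$ versus $\log N$ bookkeeping, both of which the paper absorbs into ``the result follows'').
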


\begin{proof}
Let $m=2\ell+1$. 
For each
$w=(w_1,\dots,w_\ell)\in\Z^\ell$ we again define
\[\mathcal{R}_w=\{r\in[1,N/\ell]\,:\,jr+w_j\in A \ (1\leq j\leq\ell)\}.\]

It follows from Theorem D that if, for some $w\in\Z^\ell$ we have
\begin{equation}\label{eq1}
\frac{|R_w|}{N/\ell}\geq C\left(\frac{1}{\log N/\ell}\right)^{c\log\log\log\log (N/\ell)}
\end{equation}
then there will necessarily exist $r',r''\in\mathcal{R}_w$ and $r\ne0$ such that $r'-r''=r^2$ and consequently also that
\[jr^2\in A-A\]
for each $1\leq j\leq\ell$. Utilizing the fact that $A-A$ is symmetric, it then follows that the difference set $A-A$ contains an arithmetic progression of length $2\ell+1$ with a square common difference.

It therefore suffices to establish a condition on the set $A$ guaranteeing that estimate (\ref{eq1}) will hold.
We saw, in the proof of Theorem \ref{Croot} above, that there exists
$w=(w_1,\dots,w_\ell)\in\Z^\ell$ such that
\[\frac{|R_w|}{N/\ell}\geq \left(\frac{|A|}{N}\right)^\ell\frac{1}{2^\ell}.\] 
Hence, inequality (\ref{eq1}) will hold provided
\[\frac{|A|}{N}\geq 2C^{1/\ell}\left(\frac{1}{\log N/\ell}\right)^{c\log\log\log\log (N/\ell)/\ell}\]
and the result follows.
\end{proof}

The methods of Pintz, Steiger and Szemer\'edi were later extended by Balog, Pelik\'an, Pintz and Szemer\'edi \cite{BPPS} to obtain Theorem D (with the same bounds, but with the constant $C$ now depending on $k$), and hence also Corollary \ref{CorPSS}, for $k$th power differences. However, as we mentioned above, these impressive bounds have yet to be established for arbitrary polynomials with integer coefficients and zero constant term.

\section{A polynomial generalization of Theorem \ref{MDS}}\label{polyvar}

\subsection{A special case of Theorems \ref{SimpleMDPoly} and \ref{MDPoly}}
Before stating our polynomial generalization of Theorem \ref{MDS}, namely Theorem \ref{MDPoly} (which contains as a special case a weak variant of Theorem C for sparse difference sets), we present the following very special case. It is our hope that working through this special case will help motivate the ultimate formulation of Theorem \ref{MDPoly}.

\begin{thm}[A special case of Theorems \ref{SimpleMDPoly} and  \ref{MDPoly}]\label{SinglePoly} 
Let $P(r)=a_k r^k+\cdots+a_1 r+a_0$ with each $a_j\in\Z$ and $a_k>0$. 
If $N$ is sufficiently large and $A\subseteq[1,N]$ with  \[|A|/N\geq (4a_k^{1/k}) N^{-1/k}\] then there exist $r',r''\in\N$ with $r'\ne r''$ such that \[P(r')-P(r'')\in A-A.\]
\end{thm}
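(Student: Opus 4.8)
The plan is to mimic the averaging argument used in the proof of Theorem~\ref{Croot}, but now running over a single parameter $r$ and translating by a single vector $w$, rather than over $\ell$ parameters. Given $P(r)=a_kr^k+\cdots+a_0$, I would observe that if $r',r''$ satisfy $Pr' + w \in A$ and $Pr'' + w\in A$ for some common shift $w\in\Z$, then $P(r')-P(r'')\in A-A$ automatically; so it suffices to find a shift $w$ for which the set
\[
\mathcal{R}_w=\{\,r\in[1,M]\,:\,P(r)+w\in A\,\}
\]
has at least two elements, where $M$ is chosen so that $P$ maps $[1,M]$ into an interval of length $O(N)$. Since $a_k>0$, for $r$ up to roughly $(N/a_k)^{1/k}$ the value $P(r)$ lies in an interval of length $\asymp N$ (for $N$ large, the lower-order terms are absorbed), so take $M=(N/a_k)^{1/k}$ up to the usual floor abuse, which is where the $a_k^{1/k}N^{-1/k}$ shape of the hypothesis comes from.

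The key steps, in order, are: (1) fix the range $M$ and the set $\mathcal{W}$ of ``admissible'' shifts $w$, namely those $w$ for which $P(r)+w$ can possibly land in $[1,N]$ for some $r\in[1,M]$; because $P([1,M])$ has diameter $O(N)$, one gets $|\mathcal{W}| \le CN$ for an explicit constant $C$ depending on $a_k,\dots,a_0$ (and here one uses $N$ large to control the contribution of the lower-order coefficients). (2) Compute the average
\[
\frac{1}{|\mathcal{W}|}\sum_{w\in\mathcal{W}}|\mathcal{R}_w|
=\frac{1}{|\mathcal{W}|}\sum_{w\in\mathcal{W}}\sum_{r=1}^{M}1_A(P(r)+w)
=\frac{1}{|\mathcal{W}|}\,|A|\,M,
\]
the last equality because for each fixed $r$ the map $w\mapsto P(r)+w$ is a bijection onto a translate of $[1,N]\supseteq A$, so $\sum_w 1_A(P(r)+w)=|A|$. (3) Conclude there exists $w$ with $|\mathcal{R}_w|\ge |A|M/|\mathcal{W}| \gtrsim (|A|/N)\cdot (N/a_k)^{1/k}$, which exceeds $1$ — hence is $\ge 2$ — precisely when $|A|/N \ge C' a_k^{1/k} N^{-1/k}$; tracking the constant carefully gives $C'$ that one can bound by $4$ (mirroring the $2\le C_\ell\le 4$ computation in Theorem~\ref{Croot}). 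Then the two distinct elements $r',r''\in\mathcal{R}_w$ furnish $P(r')-P(r'')\in A-A$.

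The main obstacle — really the only non-cosmetic point — is the bookkeeping around the lower-order terms of $P$ and the ``$N$ sufficiently large'' hypothesis: one needs that the image $P([1,M])$ is genuinely contained in an interval of length $(1+o(1))N$ (or at worst $O(N)$ with the implied constant not spoiling the final $4a_k^{1/k}$), which requires $M^k a_k$ to dominate $|a_{k-1}|M^{k-1}+\cdots+|a_0|$, i.e. $M$ — equivalently $N$ — bounded below in terms of the ratios $|a_j|/a_k$. This is exactly why the clean constant $4a_k^{1/k}$ can only be asserted for $N$ large, whereas in the purely linear/homogeneous setting of Theorem~\ref{Croot} no such largeness was needed. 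Everything else is the same averaging-and-pigeonhole skeleton, and indeed the statement is flagged as a special case of the more general Theorem~\ref{MDPoly}, so I would keep the proof short and defer the fully general (multi-polynomial, multi-dimensional) constant chase to Section~\ref{proofs}.
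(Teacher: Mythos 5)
Your proposal is correct and follows essentially the same route as the paper: the same choice $N_0=(N/a_k)^{1/k}$, the same admissible shift set $\mathcal{W}$ with $|\mathcal{W}|\leq 4N$, and the same averaging-plus-pigeonhole step, including the correct identification of where the ``$N$ sufficiently large'' hypothesis enters. No substantive differences to report.
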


Note that a conclusion of the form $P(r)\in A-A$ for such sparse sets is forbidden, see the remark proceeding Theorem D concerning partial progress towards a conjecture of Ruzsa.

\begin{proof} 
It suffices to establish the existence of a $w\in\Z$ such that $|\mathcal{R}_w|\geq2$ where
\[\mathcal{R}_w=\{r\in[1,N_0]\,:\,P(r)+w\in A\}\]
with $N_0=(N/a_k)^{1/k}$.
We define $\mathcal{W}$ to be the \emph{smallest} collection of $w\in\Z$ for which 
\[P(r)+\mathcal{W}\supseteq[1,N]\]
for all $r\in[1,N_0]$.
If $N$ is sufficiently large (depending on the coefficients of $P$) it follows that 
\[\max_{r\in[1,N_0]}|P(r)|\leq P(N_0)\leq 2N\] 
and hence that
\[\mathcal{W}\subseteq\{w\in\Z\,:\,1-P(N_0)\leq w\leq P(N_0)-1\}\]
from which we can conclude that
\[|\mathcal{W}|\leq 4N.\]

\begin{rem}\label{sufflarge} If $P(r)>0$ on $[1,N_0]$, then we could conclude, as in the proof of Theorem \ref{Croot}, that in fact $|\mathcal{W}|\leq 2N$ (provided that $N$ is large enough).
In the further special case when $P(r)=a_k r^k$ we note that one can drop the ``let $N$ be sufficiently large'' assumption in the statement of the theorem.
\end{rem}

Since the average
\[\frac{1}{|\mathcal{W}|}\sum_{w\in\mathcal{W}}|\mathcal{R}_w|=\frac{1}{|\mathcal{W}|}\sum_{w\in\mathcal{W}}\sum_{r=1}^{N_0} 1_{A}(P(r)+w)\geq\frac{|A|N_0}{4N}\]
for $N$ sufficiently large, it follows that if
\[|A|/N\geq (4a_k^{1/k}) N^{-1/k}\]
and $N$ is sufficiently large, then there will necessarily exist $w\in\mathcal{W}$ for which $|\mathcal{R}_w|\geq2$.
\end{proof}

\comment{
Combining the arguments used to prove Theorems \ref{Croot} and \ref{SinglePoly} we can obtain the following

\begin{thm}\label{MultiPoly} 
Let $P_1,\dots,P_\ell\in\Z[r]$ with $\max_{j}\deg P_j=k$, then there exists a constant $C_{\ell, k}=C_{\ell, k}(P_1,\dots,P_\ell)$ such that if $N$ is sufficiently large and
$A\subseteq[1,N]$ with  \[|A|/N\geq C_{\ell, k} N^{-1/\ell k}\] then there exist $r',r''\in\N$ with $r'\ne r''$ such that \[\left\{P_1(r')-P_1(r''), \dots, P_\ell(r')-P_\ell(r'')\right\} \subseteq A-A.\]
\end{thm}

\begin{rem}[On the constant $C_{\ell,k}$]
Let $s$ denote the largest leading coefficient (in absolute value) among those polynomials of largest degree, namely
\[s=\max_{1\leq j\leq \ell}\lim_{r\rightarrow\infty} |P_{j}(r)|/r^k.\]
We shall see in the proof below that we can take
\[C_{\ell,k}=4(2s^{1/k})^{1/\ell}\]
in general and can in fact replace the 4 with 2 in the special case where each of the $P_j$ take only positive values on $\N$. 

Therefore, using the symmetry of $A-A$ as before, we \emph{exactly} recover Theorem \ref{Croot} (with the same constant) as a special case of Theorem \ref{MultiPoly}, namely the special case where $P_j(r)=jr$ (where we can again drop the ``let $N$ be sufficiently large'' assumption as remarked above in the proof of Theorem \ref{SinglePoly}). 
\end{rem}

\begin{proof}[Proof of Theorem \ref{MultiPoly}]
For each
$w=(w_1,\dots,w_\ell)\in\Z^\ell$ we now define
\[\mathcal{R}_w=\{r\in[1,N_0]\,:\,P_j(r)+w_j\in A \ (1\leq j\leq\ell)\},\]
with $N_0=(N/s)^{1/k}$.

As before it will suffices to establish the existence of a $w\in\Z^\ell$ such that $|\mathcal{R}_w|\geq2$.
In order to do this we restrict our attention to the set of those $w$ for which $\mathcal{R}_w$ has at least a chance of being non-empty. Since, for $N$ sufficiently large (depending on the coefficients of $P_j$), 
\[\max_{r\in[1,N_0]}|P_j(r)|\leq |P_j(N_0)|\leq 2N\] 
for all $1\leq j\leq \ell$, we will therefore consider the set
\[\mathcal{W}=\{w\in\Z^\ell\,:\, |w_j|\leq |P_j(N_0)|-1 \ (1\leq j\leq\ell)\},\]
and note that $|\mathcal{W}|\leq  (4N)^\ell.$

Since the average
\[\frac{1}{|\mathcal{W}|}\sum_{w\in\mathcal{W}}|\mathcal{R}_w|=\frac{1}{|\mathcal{W}|}\sum_{w\in\mathcal{W}}\sum_{r=1}^{N_0}\prod_{j=1}^\ell 1_{A}(P_j(r)+w_j)\geq\left(\frac{|A|}{N}\right)^\ell\frac{N_0}{4^\ell}\]
for $N$ sufficiently large it follows that there will necessarily exist $w\in\mathcal{W}$ for which $|\mathcal{R}_w|\geq2$ provided that
\[|A|/N\geq (2^{1/\ell}4s^{1/\ell k}) N^{-1/\ell k}\]
and $N$ is sufficiently large.
\end{proof}

}

\subsection{A polynomial variant of Theorem \ref{MDS}}

Let $\{v_1,\dots,v_\ell\}$ be a fixed configuration of vectors in $\mathbb{Z}^d$. Given polynomials $P_1,\dots,P_\ell\in\Z[r]$ with $\max_{1\leq j\leq \ell}\deg P_j\leq k$, our methods allow us to establish the following \emph{non-isotropic} generalization of Theorem \ref{MDS}. 

\begin{thm}\label{SimpleMDPoly}
If $A\subseteq[1,N]^d$ with $|A|/N^d\geq CN^{-1/\ell k}$, for some constant $C$, then there exist $r',r''\in\N$ with $r'\ne r''$ such that \[\left\{\left(P_{1}(r')-P_{1}(r'')\right)v_1,\dots,\left(P_{\ell}(r')-P_{\ell}(r'')\right)v_\ell\right\}\subseteq A-A.\]
\end{thm}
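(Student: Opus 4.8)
The plan is to combine the multi-dimensional averaging argument used in the proof of Theorem~\ref{MDS} (sketched via Theorem~\ref{Croot}) with the polynomial rescaling trick from the proof of Theorem~\ref{SinglePoly}. Fix the configuration $v_1,\dots,v_\ell\in\Z^d$ and polynomials $P_1,\dots,P_\ell\in\Z[r]$ with $\max_j\deg P_j\leq k$, and write $s=s(v_1,\dots,v_\ell)=\max_j\|v_j\|_\infty$. For each $w=(w_1,\dots,w_\ell)\in(\Z^d)^\ell$ define
\[
\mathcal{R}_w=\{r\in[1,N_0]\,:\,P_j(r)v_j+w_j\in A\ (1\leq j\leq\ell)\},
\]
where $N_0$ is chosen so that $|P_j(r)|\,\|v_j\|_\infty\leq 2N$ for all $r\in[1,N_0]$ and all $j$ once $N$ is large enough; concretely one can take $N_0$ to be a small constant multiple of $N^{1/k}$ (the constant depending on the leading coefficients of the $P_j$ and on $s$). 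The point is that if $|\mathcal{R}_w|\geq 2$ for some $w$, say $r',r''\in\mathcal{R}_w$ with $r'\neq r''$, then subtracting the two membership conditions coordinate-wise gives $(P_j(r')-P_j(r''))v_j\in A-A$ for every $j$, which is exactly the desired conclusion.

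Next I would bound the relevant index set. Let
\[
\mathcal{W}=\{w\in(\Z^d)^\ell\,:\,P_j(r)v_j+w_j\in[1,N]^d\text{ for some }r\in[1,N_0],\ 1\leq j\leq\ell\},
\]
i.e.\ the smallest box of $w$'s outside which $\mathcal{R}_w$ is automatically empty. Since each coordinate of $P_j(r)v_j$ lies in an interval of length at most $2\cdot 2N=4N$ (using $|P_j(r)|\|v_j\|_\infty\leq 2N$ on $[1,N_0]$), we get $|\mathcal{W}|\leq (4N)^{d\ell}$; a more careful count, tracking the actual sizes $|\langle v_j,e_i\rangle|$ of the projections as in the remark on $C_{\ell,d}$, would give the sharper constant, but the crude bound suffices for a statement with an unspecified constant $C$. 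Then the averaging identity
\[
\frac{1}{|\mathcal{W}|}\sum_{w\in\mathcal{W}}|\mathcal{R}_w|
=\frac{1}{|\mathcal{W}|}\sum_{w\in\mathcal{W}}\sum_{r=1}^{N_0}\prod_{j=1}^\ell 1_A(P_j(r)v_j+w_j)
=\frac{|A|^\ell N_0}{|\mathcal{W}|}
\geq\frac{|A|^\ell N_0}{(4N)^{d\ell}}
\]
shows there is a $w$ with $|\mathcal{R}_w|\geq |A|^\ell N_0/(4N)^{d\ell}$, which exceeds $1$ — hence is $\geq 2$ — provided $|A|/N^d\geq C N^{-1/(\ell k)}$ for a suitable $C$ (absorbing $4^{d}$, the constant from $N_0\asymp N^{1/k}$, and $s$, once $N$ is large enough that $P_j(N_0)$ is comparable to $2N$).

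The only genuine subtlety — and the step I would be most careful about — is the choice of $N_0$ and the ``$N$ sufficiently large'' bookkeeping, exactly as in Theorem~\ref{SinglePoly}: for a general polynomial $P_j$ (not a monomial, possibly with negative or vanishing leading behaviour on part of the range, or with lower-order terms that dominate for small $r$) one needs $N$ large in terms of the coefficients to guarantee both $N_0\geq 1$ and the two-sided estimate $\tfrac12\, |\text{leading term}|\le |P_j(r)|\le 2|\text{leading term}|$ that makes $N_0\asymp N^{1/k}$ honest; this is also where the ``there exist $r'\ne r''$'' (rather than ``there exists $r\ne 0$'') formulation becomes essential, since $P_j(r')-P_j(r'')$ need not take the value $P_j(r)$. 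Everything else is the same averaging-over-shifts argument already carried out three times in the paper, and the symmetry of $A-A$ is not even needed here because the conclusion is stated as a difference $P_j(r')-P_j(r'')$. Reducing Theorem~\ref{MDS} itself is the case $k=1$, $P_j(r)=r$, where $N_0=N/s$ and no largeness hypothesis on $N$ is required.
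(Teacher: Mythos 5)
Your argument is correct and is essentially the paper's own: the paper proves this by setting $Q_{ij}(r)=P_j(r)\langle v_j,e_i\rangle$ and running exactly your averaging-over-shifts computation (with $N_0=(N/t)^{1/k}$ and $|\mathcal{W}|\leq(4N)^{\ell d}$) as the proof of the more general Theorem~\ref{MDPoly}; you simply carry it out directly on $P_j(r)v_j$ rather than factoring through that intermediate statement. The points you flag -- the ``$N$ sufficiently large'' bookkeeping for general polynomials and the need for the $r',r''$ formulation -- are precisely the ones the paper handles the same way.
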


Since, for each $1\leq j \leq \ell$, we can write
\[\left(P_{j}(r')-P_{j}(r'')\right)v_j=\sum_{i=1}^d \left(Q_{ij}(r')-Q_{ij}(r'')\right)e_i\]
where
$Q_{ij}(r):=P_j(r)\langle v_j,e_i\rangle$
we see that Theorem \ref{SimpleMDPoly} is a special case of the following more general result.

\begin{thm}\label{MDPoly} Let $\mathcal{Q}=\{Q_{ij}\}_{\substack{1\leq i\leq d \\ 1\leq j\leq \ell}}$ be a fixed collection of $\ell d$ polynomials in $\Z[r]$ and $k=\max_{Q_{ij}\in \mathcal{Q}}\deg Q_{ij}$, then
there exists a constant $C_{\ell,d,k}=C_{\ell,d,k}(\mathcal{Q})$ such that if $N$ is sufficiently large and $A\subseteq[1,N]^d$ with
\[|A|/N^d\geq C_{\ell,d,k} N^{-1/\ell k}\] then there exists $r',r''\in\N$ with $r'\ne r''$ such that \[\sum_{i=1}^d\left(Q_{ij}(r')-Q_{ij}(r'')\right)e_i\,\in A-A.\] 
\end{thm}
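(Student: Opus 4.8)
The plan is to follow the same averaging scheme that drives the proofs of Theorems \ref{Croot} and \ref{SinglePoly}, now carried out simultaneously in all $d$ coordinates. For each $w=(w_1,\dots,w_d)\in\Z^d$ set
\[
\mathcal{R}_w=\Bigl\{r\in[1,N_0]\ :\ \sum_{i=1}^d\bigl(Q_{ij}(r)+w_i\bigr)e_i\in A\ \text{ for each } 1\leq j\leq\ell\Bigr\},
\]
where $N_0$ is chosen so that every $Q_{ij}(r)$ stays in $[1,N]$-range up to the shift; concretely one takes $N_0=(N/s)^{1/k}$ with $s$ the largest leading coefficient (in absolute value) among the $Q_{ij}$ of degree exactly $k$, and invokes ``$N$ sufficiently large'' exactly as in Remark \ref{sufflarge} to guarantee $|Q_{ij}(r)|\leq |Q_{ij}(N_0)|\leq 2N$ on $[1,N_0]$. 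The point of the single shift vector $w$ (rather than one shift per pair $(i,j)$) is that if $r',r''\in\mathcal{R}_w$ with $r'\neq r''$ then for each fixed $j$ the two points $\sum_i(Q_{ij}(r')+w_i)e_i$ and $\sum_i(Q_{ij}(r'')+w_i)e_i$ both lie in $A$, and their difference is exactly $\sum_i(Q_{ij}(r')-Q_{ij}(r''))e_i$, which is therefore in $A-A$. So it suffices to produce one $w$ with $|\mathcal{R}_w|\geq 2$.

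Next I would restrict to the window $\mathcal{W}=\{w\in\Z^d:|w_i|\leq \max_j|Q_{ij}(N_0)|-1,\ 1\leq i\leq d\}$, outside of which $\mathcal{R}_w$ is automatically empty, and bound $|\mathcal{W}|\leq (4N)^d$ (or $(2N)^d$ if all the relevant polynomials are positive on the range). Then the averaging identity
\[
\frac{1}{|\mathcal{W}|}\sum_{w\in\mathcal{W}}|\mathcal{R}_w|
=\frac{1}{|\mathcal{W}|}\sum_{w\in\mathcal{W}}\sum_{r=1}^{N_0}\prod_{j=1}^\ell 1_A\Bigl(\sum_{i=1}^d(Q_{ij}(r)+w_i)e_i\Bigr)
\geq \Bigl(\frac{|A|}{N^d}\Bigr)^\ell\frac{N_0}{4^d}
\]
follows because for each fixed $r$ and each fixed $j$ the inner sum over $w\in\Z^d$ of $1_A(\cdots)$ counts $|A|$ (the map $w\mapsto \sum_i(Q_{ij}(r)+w_i)e_i$ is a bijection onto $\Z^d$), and the $\mathcal{W}$-window was chosen so that, for $N$ large, it contains every $w$ that can contribute for any $r\in[1,N_0]$; here one needs the standard observation that $\prod_j 1_A$ is nonzero only when each factor's argument lies in $[1,N]^d$, which pins $w_i$ into the stated range. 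Consequently some $w$ has $|\mathcal{R}_w|\geq(|A|/N^d)^\ell N_0/4^d\geq 2$ as soon as $|A|/N^d\geq C_{\ell,d,k}N^{-1/\ell k}$ with $C_{\ell,d,k}=(2\cdot 4^d s)^{1/\ell k}$ (absorbing the $N_0=(N/s)^{1/k}$ into the exponent $-1/\ell k$), which is the claimed bound.

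The one genuinely delicate point — and the step I expect to be the main obstacle — is the bookkeeping behind the averaging lower bound when the $Q_{ij}$ are arbitrary (in particular possibly non-monotone, or taking negative values) polynomials: one has to be careful that the window $\mathcal{W}$ really does capture \emph{every} contributing $w$ for \emph{every} $r\in[1,N_0]$ and \emph{every} $j$ simultaneously, so that the double sum genuinely equals $\sum_{r=1}^{N_0}\prod_j(\text{full count in }\Z^d)$ rather than a truncated version; this is exactly where ``$N$ sufficiently large (depending on $\mathcal{Q}$)'' is used, via $\max_{r\in[1,N_0]}|Q_{ij}(r)|\leq|Q_{ij}(N_0)|$, i.e.\ that each $Q_{ij}$ is eventually dominated by its value at the right endpoint. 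Everything else — the bijectivity of the shift, the symmetry-free extraction of the difference (note that unlike Theorem \ref{Croot} we do not need $A-A$'s symmetry here, since we directly get the configuration $\sum_i(Q_{ij}(r')-Q_{ij}(r''))e_i$), and the final calculus check that the constant is finite — is routine. Finally, Theorem \ref{SimpleMDPoly} drops out immediately by specializing $Q_{ij}(r)=P_j(r)\langle v_j,e_i\rangle$, and Theorem \ref{MDS} in turn by taking $P_j(r)=r$ for all $j$, so $k=1$.
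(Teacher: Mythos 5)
There is a genuine gap, and it sits exactly at the step you flagged as delicate, but the problem is not the window bookkeeping --- it is your choice of shift. You average over a \emph{single} vector $w=(w_1,\dots,w_d)\in\Z^d$ shared by all $\ell$ conditions, and then assert
\[
\sum_{w\in\mathcal{W}}\prod_{j=1}^\ell 1_A\Bigl(\sum_{i=1}^d(Q_{ij}(r)+w_i)e_i\Bigr)\ \gtrsim\ |A|^\ell,
\]
justified by the bijectivity of $w\mapsto\sum_i(Q_{ij}(r)+w_i)e_i$ for each fixed $j$. That bijectivity gives $\sum_w 1_A(\cdots)=|A|$ for a \emph{single} $j$, but the product over $j$ does not factor through the sum over $w$, because the same $w$ appears in every factor. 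What the left-hand side actually counts is $\bigl|\bigcap_{j}\bigl(A-x_j(r)\bigr)\bigr|$ where $x_j(r)=\sum_i Q_{ij}(r)e_i$, i.e.\ the number of translates of the $\ell$-point configuration $\{x_1(r),\dots,x_\ell(r)\}$ contained in $A$. This is at most $|A|$, not of order $|A|^\ell/N^{d(\ell-1)}$ from below, and it can be $0$ for a fixed $r$ even when $A$ is dense (already for $d=1$, $\ell=2$ a set of density $1/2$ can avoid all translates of a prescribed two-point pattern). Bounding it below in terms of $|A|$ alone is essentially the problem of finding the configuration inside $A$ itself, which is precisely what the Croot--Ruzsa--Schoen device is designed to avoid. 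So your averaging lower bound $(|A|/N^d)^\ell N_0/4^d$ does not follow, and the argument collapses at that point.

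The fix, which is what the paper does, is to average over $w=(w_1,\dots,w_\ell)\in(\Z^d)^\ell$ with an \emph{independent} shift $w_j\in\Z^d$ for each $j$, defining $\mathcal{R}_w=\{r\in[1,N_0]:w_j+\sum_i Q_{ij}(r)e_i\in A\ (1\leq j\leq\ell)\}$ with $N_0=(N/t)^{1/k}$. Then the sum over $w\in(\Z^d)^\ell$ factors as $\prod_j\sum_{w_j}1_A(w_j+x_j(r))=|A|^\ell$, the window becomes $|\mathcal{W}|\leq(4N)^{\ell d}$, and the average is at least $(|A|/N^d)^\ell N_0/4^{\ell d}$, yielding the constant $C_{\ell,d,k}=4^d(2t^{1/k})^{1/\ell}$. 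Note the independence of the $w_j$ costs nothing in the conclusion: for $r',r''\in\mathcal{R}_w$ the $j$-th difference only involves $w_j$ twice, so it still cancels to give $\sum_i(Q_{ij}(r')-Q_{ij}(r''))e_i\in A-A$. Your remaining observations (the reduction of Theorem \ref{SimpleMDPoly} via $Q_{ij}(r)=P_j(r)\langle v_j,e_i\rangle$, the use of $N$ large to control $|Q_{ij}|$ on $[1,N_0]$, and that no symmetry of $A-A$ is needed) are all consistent with the paper once the shift is corrected.
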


\begin{rem}[on the constant $C_{\ell,d,k}$ in Theorem \ref{MDPoly}]
If we now let $t$ denote the absolute value of the largest leading coefficient of the polynomials that have the largest degree, namely
\be\label{s2}
t=\max_{Q_{ij}\in \mathcal{Q}}\lim_{r\rightarrow\infty} |Q_{ij}(r)|/r^k
\ee
then,
as we shall see in the proof below, we will be able take
\[C_{\ell,d,k}=4^d(2t^{1/k})^{1/\ell}\]
in general and replace the 4 with 2 in the special case where the $Q_{ij}$ take only positive values on $\N$. 

Note that we \emph{exactly} recover Theorem \ref{MDS} (with the same constant) as a special case of Theorem \ref{MDPoly}, namely the special case where $Q_{ij}(r)=r\langle v_j,e_i\rangle$, since in this case we can drop the ``let $N$ be sufficiently large'' assumption (see Remark \ref{sufflarge} in the proof of Theorem \ref{SinglePoly}), we leave this observation for the reader to verify. 
 
\end{rem}

\section{Proof of Theorem \ref{MDS} and Theorem \ref{MDPoly}}\label{proofs}

\subsection{Proof of Theorem \ref{MDS}}
%\begin{proof}[Proof of Theorem \ref{MDS}]
Let $s$ be defined by formula (\ref{s1}). For each
$w=(w_1,\dots,w_\ell)\in(\Z^d)^\ell$ we define
\[\mathcal{R}_w=\{r\in[1,N/s]\,:\,rv_j+w_j\in A \ (1\leq j\leq\ell)\}\]
and $\mathcal{W}$ to be the \emph{smallest} collection of $w=(w_1,\dots,w_\ell)\in(\Z^d)^\ell$ for which 
\[\{rv_j+w_j\,:\, w\in\mathcal{W}\}\supseteq[1,N]^d\]
for all $1\leq j\leq\ell$ and $r\in[1,N/s]$.

As in the proof of Theorem \ref{Croot} it will suffice to establish the existence of a $w\in\mathcal{W}$ such that $|\mathcal{R}_w|\geq2$.
The fact that this happens whenever
\[\left(\frac{|A|}{N^d}\right)^\ell\geq\frac{2s}{N}\prod_{i=1}^d\prod_{j=1}^\ell\Bigl(1+\frac{\left|\langle v_j,e_i\rangle\right|}{s}\Bigr)\]
follows immediately from the observation that the average
\[\frac{1}{|\mathcal{W}|}\sum_{w\in\mathcal{W}}|\mathcal{R}_w|=\frac{1}{|\mathcal{W}|}\sum_{w\in\mathcal{W}}\sum_{r=1}^{N/s}\prod_{j=1}^\ell 1_{A}(rv_j+w_j)=\frac{1}{|\mathcal{W}|}|A|^\ell\frac{N}{s}\]
and the (easily verified) fact that
\[|\mathcal{W}|\leq N^{d\ell}\prod_{i=1}^d\prod_{j=1}^\ell\Bigl(1+\frac{\left|\langle v_j,e_i\rangle\right|}{s}\Bigr).\qedhere\] 
%\end{proof} 

\subsection{Proof of Theorem \ref{MDPoly}}
%\begin{proof}[Proof of Theorem \ref{MDPoly}]
Let $t$ be defined by formula (\ref{s2}). For each
$w=(w_1,\dots,w_\ell)\in(\Z^d)^\ell$ we define
\[\mathcal{R}_w=\Bigl\{r\in[1,N_0]\,:\,w_j+\sum_{i=1}^d Q_{ij}(r)e_i \in A \ (1\leq j\leq\ell)\Bigr\},\]
with $N_0=(N/t)^{1/k}$.
%As before it will suffices to establish the existence of a $w\in(\Z^d)^\ell$ such that $|\mathcal{R}_w|\geq2$.
%In order to do this we restrict our attention to the set of those $w$ for which $\mathcal{R}_w$ has at least a chance of being non-empty. 
Since, for $N$ sufficiently large (depending on the coefficients of $Q_{ij}$),
\[\max_{r\in[1,N_0]}|Q_{ij}(r)|\leq |Q_{ij}(N_0)|\leq 2N\] 
for all $1\leq j\leq \ell$, we will restrict our attention to those $w$ that are contained in the set
\[\mathcal{W}=\{w\in(\Z^d)^\ell\,:\, |\langle w_j,e_i\rangle|\leq 2N-1 \ (1\leq j\leq\ell,\, 1\leq i\leq d)\},\]
and note that $|\mathcal{W}|\leq  (4N)^{\ell d}$ provided that $N$ is sufficiently large.
Since the average
\[\frac{1}{|\mathcal{W}|}\sum_{w\in\mathcal{W}}|\mathcal{R}_w|=\frac{1}{|\mathcal{W}|}\sum_{w\in\mathcal{W}}\sum_{r=1}^{N_0}\prod_{j=1}^\ell 1_{A}\Bigl(w_j+\sum_{i=1}^d Q_{ij}(r)e_i\Bigr)\geq\left(\frac{|A|}{N^d}\right)^\ell\frac{N_0}{4^{\ell d}}\]
for $N$ sufficiently large, it follows that there will necessarily exist $w\in\mathcal{W}$ for which $|\mathcal{R}_w|\geq2$ provided that
\[|A|/N^d\geq (2^{1/\ell}4^dt^{1/\ell k}) N^{-1/\ell k}\]
and $N$ is sufficiently large.
%\end{proof}

%\section{Acknowledgements}

\end{document}